\numberwithin{equation}{section}
\newtheorem{thm}{Theorem}[section]
\newtheorem{lem}[thm]{Lemma}
\newtheorem{cor}[thm]{Corollary}
\newtheorem{example}[thm]{Example}
\newtheorem{caution}[thm]{Remark}
 \newcommand{\N}{\mathbb{N}}
 \newcommand{\Z}{\mathbb{Z}}
 \newcommand{\Q}{\mathbb{Q}}
 \newcommand{\R}{\mathbb{R}}
 \newcommand{\epsi}{\varepsilon}
 \newcommand{\neron}[1]{\mathrm{NS}({#1})}
 \newcommand{\nef}{\mathrm{Nef}(\mathrm{Km}^2(A))}
 \newcommand{\mov}{\mathrm{Mov}(\mathrm{Km}^2(A))}
 \newcommand{\rk}{\mathrm{rk}}
 \newcommand{\innerpro}[2]{\langle {#1}, {#2} \rangle}
 \newcommand{\Hilb}{\mathrm{Hilb}}
 \newcommand{\Km}{\mathrm{Km}}
 \newcommand{\NS}{\mathrm{NS}}
 \newcommand{\Nef}{\mathrm{Nef}}
 \newcommand{\Mov}{\mathrm{Mov}}
 \newcommand{\Endo}{\mathrm{End}}
 \newcommand{\Amp}{\mathrm{Amp}}
\begin{document}

\begin{center}
\Large{{\bf Nef Cone of a Generalized Kummer 4-fold}}
\end{center}

\begin{center}
\normalsize{Akira Mori\footnote[0]{2018 {\it Mathematics Subject Classification.} 14J35}} 
\end{center}

\setcounter{section}{-1}

In this note, we calculate the boundary of movable cones and nef cones of the generalized Kummer 4-fold $\mathrm{Km}^2(A)$ attached to an abelian surface $A$ with $\rk \neron{A} = 1$.

\section{Introduction}

For an abelian surface $A$, Beauville \cite{Be} constructed a series of irreducible symplectic manifolds
$\Km^{l-1}(A)$ of dimension $2(l-1)$ $(l \geq 2)$.
Let $S^l(A)$ be the $l$-th symmetric product of $A$ and
$\Hilb^l(A)$ the Hilbert scheme of $l$ points on $A$.
Thus $\Hilb^l(A)$ parameterizes 0-dimension subscheme $Z$ of $A$
of $\chi({\cal O}_Z)=l$.
We have Hilbert-Chow morphism $\varphi:\Hilb^{l}(A) \to S^{l}(A)$
sending a subscheme $Z$ of length $l$ to the 0-cycle $[Z]$ defined by $Z$.  
Then $\Km^{l-1}(A)$ is the fiber of the morphism $\Hilb^{l}(A) \to S^{l}(A) \overset{\sigma}{\to} A$,
where $\sigma(x_1,x_2,...,x_{l})=\sum_i x_i$.
Thus we have the following commutative diagram:
\begin{equation}
\begin{CD}
\Km^{l-1}(A) @>>> \Hilb^{l}(A) @.\\
@V{\varphi}VV @VV{\varphi}V @.\\
\sigma^{-1}(0) @>>> S^{l}(A) @>{\sigma}>> A
\end{CD}
\end{equation}
If $l=2$, then 
$\Km^1(A)$ is nothing but the Kummer K3 surface of $A$,
Hence $\Km^{l-1}(A)$ $(l \geq 3)$ is called a generalized Kummer manifold.

For an irreducible symplectic manifold $M$, $H^2(M,{\Bbb Z})$ and hence Neron-Severi group
$\NS(M)$ has a bilinear form called the Beauville-Bogomolov form \cite{Be}. 
Assume that $A$ is an abelian surface with the Picard rank $\rho(A)=1$.
Let $H$ be an ample generator of $\NS(A)$ and set $n:=H^2/2$.
Then
$\NS(\Km^{l-1}(A))$ ($l \geq 3$) is described as
\begin{equation}
\NS(\Km^{l-1}(A)) \cong {\Bbb Z}h \oplus \Z \delta
\end{equation}
and the Beauville-Bogomolov bilinear form satisfies
$$
h^2=2n,\; \delta^2=-2l,\;(h,\delta)=0,
$$ 
where $h$ is the pull-back of an ample divisor on $S^{l}(A)$
by $\Km^{l-1}(A) \to S^{l}(A)$ (cf. \cite[Proposition 4.11]{Yoshi1}). 
Let $D$ be the exceptional divisor
of $\Km^{l-1}(A) \to  \sigma^{-1}(0)$.
Then $D \in |2\delta|$ and 
$kh-\delta$ is ample for $k \gg 0$.

In \cite{Yoshi1}, Yoshioka gave a lattice theoretic description of
the movable cone $\Mov(\Km^{l-1}(A))$ and the nef cone $\Nef(\Km^{l-1}(A))$ of $\Km^{l-1}(A)$
($l \geq 3$).
In this note, we shall give a more concrete description of
$\Nef(\Km^2(A))$ for an abelian surface of $\rho(A)=1$.


In order to state our main result,
let us consider a Diophantine equation 
\begin{equation}\label{eq:Pell}
3Y^2-nX^2=3
\end{equation}
of Pell type.
If $\sqrt{n/3} \not \in \Q$, then
let $(X_1,Y_1)$ ($X_1,Y_1>0$) be the fundamental solution of \eqref{eq:Pell}, that is,
$(X_1,Y_1)$ is a solution of \eqref{eq:Pell}   
minimizing $X$ in the solution $(X,Y)$ of \eqref{eq:Pell} with $X,Y>0$. 
We define $(X_k,Y_k)$ ($k \geq 1$) by
$$
Y_k+\sqrt{n/3}X_k=
(Y_1+\sqrt{n/3}X_1)^k.
$$  
We also set $(X_0,Y_0):=(0,1)$.
Then $X_k,Y_k$ are positive integers satisfying \eqref{eq:Pell} and
$\{(\pm X_k,\pm Y_k) \mid k \geq 0\}$ is the set of all solutions.

\begin{thm}\label{thm:main}
The movable cone $\mov$ and the nef cone $\nef$ 
of $\Km^2(A)$ are characterized by the solution of Pell equation \eqref{eq:Pell}
and $n$ as following table: 
\begin{center}
{\renewcommand\arraystretch{1.5}
\begin{tabular}{|c|c|c|c|c|c|c|}

\hline
\multicolumn{2}{|c|}{type of $n$} & \multicolumn{3}{|c|}{type of $(X_1,Y_1)$} & $\nef$ & $\mov$ \\ \hline \hline

\multicolumn{2}{|c|}{$3 \nmid n$} & \multicolumn{3}{|c|}{$3 \mid X_1$} & $\R_{\geq 0}h + \R_{\geq 0}(h-\frac{nX_1}{3Y_1}\delta)$ & $\R_{\geq 0}h + \R_{\geq 0}(h-\frac{nX_1}{3Y_1}\delta)$ \\ \hline

 &  & \multicolumn{3}{|c|}{$3 \mid X_1$} & $\R_{\geq 0}h + \R_{\geq 0}(h-\frac{nX_1}{3Y_1}\delta)$ & $\R_{\geq 0}h + \R_{\geq 0}(h-\frac{nX_1}{3Y_1}\delta)$ \\ \cline{3-7}

 &  &  & $X_1$ & $3 \mid Y_1$ & $\R_{\geq 0}h + \R_{\geq 0}(h-\frac{nX_1}{3Y_1}\delta)$ & $\R_{\geq 0}h + \R_{\geq 0}(h-\frac{nX_2}{3Y_2}\delta)$ \\ \cline{5-7} 

$n=3m$ & $m$ is not & $3 \nmid X_1$ & even & $3 \nmid Y_1$ & $\R_{\geq 0}h + \R_{\geq 0}(h-\frac{nX_1}{3Y_1}\delta)$ & $\R_{\geq 0}h + \R_{\geq 0}(h-\frac{nX_3}{3Y_3}\delta)$ \\ \cline{4-7}

 & square &  & $X_1$ & $3 \mid Y_1$ & $\R_{\geq 0}h + \R_{\geq 0}(h-\frac{nX_2}{3Y_2}\delta)$ & $\R_{\geq 0}h + \R_{\geq 0}(h-\frac{nX_2}{3Y_2}\delta)$ \\ \cline{5-7}

 &  &  & odd & $3 \nmid Y_1$ & $\R_{\geq 0}h + \R_{\geq 0}(h-\frac{nX_2}{3Y_2}\delta)$ & $\R_{\geq 0}h + \R_{\geq 0}(h-\frac{nX_3}{3Y_3}\delta)$ \\ \cline{2-7}

 & $m$ is square & \multicolumn{3}{|c|}{} & 
$\R_{\geq 0} h+\R_{\geq 0}(h-\sqrt{\frac{n}{3}}\delta)$ & 
$\R_{\geq 0} h+\R_{\geq 0}(h-\sqrt{\frac{n}{3}}\delta)$ \\ \hline

\end{tabular}
}
\end{center}

\end{thm}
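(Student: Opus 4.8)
The plan is to take Yoshioka's lattice-theoretic description of $\mov$ and $\nef$ from \cite{Yoshi1} — which expresses both cones as chambers inside the positive cone cut out by walls coming from Bridgeland wall-crossing on $A$ — and to render it fully explicit by solving the arithmetic it conceals. Since $h$ is the pull-back of an ample class from $S^{l}(A)$ and its linear system realizes the Hilbert--Chow-type contraction of $D\in|2\delta|$, the ray $\R_{\geq 0}h$ is automatically a common extremal ray of both cones. The entire problem thus reduces to locating the \emph{second} extremal ray, which by the description in \cite{Yoshi1} has the shape $\R_{\geq 0}(h-t\delta)$ for a single $t>0$ pinned down by a wall class in $\NS(\Km^2(A))=\Z h\oplus\Z\delta$, where $\delta^2=-6$.

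First I would set up the orthogonality dictionary. A ray $\R_{\geq 0}(h-t\delta)$ is the wall cut out by a class $\alpha=Xh-Y\delta$ exactly when $(h-t\delta,\alpha)=0$; using $h^2=2n$, $\delta^2=-6$, $(h,\delta)=0$ this gives $t=\frac{nX}{3Y}$. The candidate ray then has square $(h-t\delta)^2=2n\cdot\frac{3Y^2-nX^2}{3Y^2}$, while $\alpha^2=2nX^2-6Y^2=-2(3Y^2-nX^2)$, so $\alpha$ is a $(-6)$-class — the square of $\delta$ itself, hence a potential wall class — precisely when $(X,Y)$ solves the Pell equation $3Y^2-nX^2=3$. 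This is where \eqref{eq:Pell} enters: its positive solutions $(X_k,Y_k)$ enumerate, in strictly increasing slope $t_k=\frac{nX_k}{3Y_k}$, all candidate walls on the $\delta$-side, with $t_k\uparrow\sqrt{n/3}$, the slope of the isotropic ray on which $(h-\sqrt{n/3}\,\delta)^2=0$. Note that the trivial solution $(X_0,Y_0)=(0,1)$ recovers $\alpha=\delta$ and the ray $\R_{\geq 0}h$ itself, i.e.\ the Hilbert--Chow wall.

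Next I would single out which candidate is the genuine extremal ray, separately for $\nef$ and $\mov$. Writing $H^2(\Km^2(A),\Z)\cong H^2(A,\Z)\oplus\Z\delta$ with $h$ primitive in the unimodular summand, one computes the divisibility $\mathrm{div}(\alpha_k)=\gcd(X_k,6Y_k)$, so the wall type of $\alpha_k$ is governed by $X_k,Y_k$ modulo $2$ and $3$ (for instance $\delta$ has $\mathrm{div}=6$, the prime-exceptional/divisorial Hilbert--Chow wall). Invoking the classification of wall (MBM) classes for the deformation type of $\Km^2(A)$, each genuine $(-6)$-wall is either divisorial or flopping according to this divisibility; a divisorial wall bounds $\mov$ (so across it $\nef=\mov$), whereas a flopping wall bounds $\nef$ strictly inside $\mov$, the latter continuing to the first subsequent divisorial wall. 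The arithmetic task is then to propagate the residues through the recursion $Y_k+\sqrt{n/3}\,X_k=(Y_1+\sqrt{n/3}\,X_1)^k$ and show the congruence type stabilizes after one, two, or three steps depending on the type of the fundamental solution $(X_1,Y_1)$ and on whether $3\mid n$ — observing in particular that $3\nmid n$ forces $3\mid X_k$ for every $k$, which is why that branch collapses to the fundamental solution with $\nef=\mov$. Matching the divisorial and flopping indices against the slope ordering reproduces exactly the entries $k=1,2,3$ in the table.

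Finally I would dispose of the degenerate branch $n=3m$ with $m$ a square: there $\sqrt{n/3}\in\Q$ and $3Y^2-nX^2=3$ reduces to $Y^2-mX^2=1$, which has only the trivial solution, so no $(-6)$-wall truncates the cone and the second extremal ray is forced onto the isotropic boundary $\R_{\geq 0}(h-\sqrt{n/3}\,\delta)$ for both $\nef$ and $\mov$. The step I expect to be the main obstacle is the third one: importing the abstract divisibility/monodromy criterion as a clean dictionary between $\gcd(X_k,6Y_k)$ and the divisorial-versus-flopping dichotomy, and then verifying \emph{uniformly in $k$} via the Pell recursion that the resulting congruence conditions on $(X_1,Y_1)$ select precisely the claimed solution in each row, so that the split between the $\nef$ and $\mov$ columns comes out correctly.
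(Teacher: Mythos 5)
Your skeleton matches the paper's: the ray $\R_{\geq 0}h$ is the Hilbert--Chow boundary, candidate walls correspond to solutions of \eqref{eq:Pell} via primitive classes $Xh-Y\delta$ of square $-6$, the slopes $\tfrac{nX_k}{3Y_k}$ increase monotonically to $\sqrt{n/3}$ (Lemma \ref{lem:slope}), the nef boundary is the first actual wall, the movable boundary is the first divisorial wall, and the case $n=3m$ with $m$ square degenerates to the isotropic ray because $Y^2-mX^2=1$ has only trivial solutions. The genuine gap is the step you yourself flag as the ``main obstacle.'' You convert walls into the divisibility invariant $\mathrm{div}(\alpha_k)=\gcd(X_k,6Y_k)=\gcd(X_k,6)$ and then ``invoke the classification of wall (MBM) classes for the deformation type of $\Km^2(A)$'' to decide divisorial versus flopping versus not-a-wall --- but you never state that dictionary (it must be: $\gcd(X,6)\in\{3,6\}$ gives a divisorial wall bounding both cones, $\gcd(X,6)=2$ gives a flopping wall bounding only $\nef$, and $\gcd(X,6)=1$ gives no wall at all), and without it not a single row of the table follows. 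This cannot be waved in as a routine citation: the Kummer deformation type is exactly where numerical characterizations of walls are delicate, since its monodromy group is much smaller than in the $K3^{[n]}$ case and orbits of $(-6)$-classes are not obviously governed by square and divisibility alone. The paper assumes no such classification; it proves the dictionary internally: Lemma \ref{lem:isotropic} shows (using $l\leq 4$, i.e.\ $v^2\leq 8$, and the parity of $u^2$) that every wall of the nef cone comes from an \emph{isotropic} Mukai vector $u$ with $\langle u,v\rangle\in\{1,2,3\}$; integrality and primitivity of $u=\tfrac{\langle u,v\rangle}{6}(v+Xh+Y\delta)$ yield the existence criterion \eqref{eq:Pell2} together with $\langle u,v\rangle=6/\gcd(Y+1,X,6)$; and Yoshioka's distinction between $\Gamma$ (nef walls) and $\Gamma_m$ (movable walls, $\langle u,v\rangle=1,2$) is what separates flopping from divisorial. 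If you insist on your $\NS$-side invariant, you must additionally prove that $\gcd(X,6)$ computes the paper's $\gcd(Y\pm 1,X,6)$ (true, but it requires $\gcd(X,Y)=1$, the Pell equation modulo $4$ and $9$, and the freedom to replace $(X,Y)$ by $(-X,-Y)$); that equivalence is itself part of the missing argument.

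The second gap is that the arithmetic you defer to the end is not a verification but the actual content of the theorem. Deciding in each row whether the winning index is $k=1$, $2$ or $3$ --- separately for $\nef$ and for $\mov$ --- requires the explicit recursions $(X_2,Y_2)=(2X_1Y_1,\,Y_1^2+mX_1^2)$ and $(X_3,Y_3)=(3X_1Y_1^2+mX_1^3,\,Y_1(Y_1^2+3mX_1^2))$ and several facts that are not generic ``residue propagation,'' e.g.: if $3\mid Y_1$ then $Y_2+1=2Y_1^2$ and $X_2=2X_1Y_1$ force $\gcd(Y_2+1,X_2,6)=6$, a divisorial wall at $k=2$; if $3\nmid X_1$ and $3\nmid Y_1$ then the Pell equation forces $3\mid m$, and one must check that $54$ divides $Y_3+1$ or $Y_3-1$ to obtain the divisorial wall at $k=3$. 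Until these computations are carried out, the split between the $\nef$ and $\mov$ columns in the middle rows of the table remains unproved.
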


As we already know the trivial boundary of $\Nef(\Km^2(A))$ defining Hilbert-Chow  contraction,
we shall describe the other boundary.
We show that it is defined
by an isotropic vector.  
Then we have Pell type equation \eqref{eq:Pell}
and get Theorem \ref{thm:main}. 
By the proof of Theorem \ref{thm:main},
we also give a chamber decomposition of $\Mov(\Km^2(A))$ in section \ref{sect:chamber}.
In section \ref{sect:appendix},
we give $\Mov(\Km^{l-1}(A))$ (Theorem \ref{thm:general}). 

{\it{Acknowledgement}}. The author would like to thank my adviser, K$\mathrm{\bar{o}}$ta Yoshioka, for his support and advice. 

\section{Preliminary}

Let $\innerpro{\ }{\ }$ be the Mukai pairing on the algebraic cohomology groups $H^{\ast}(A, \Z)_{\mathrm{alg}} := \Z \oplus \NS(A) \oplus \Z$,. 
For $x = (x_0, x_1, x_2),x'=(x_0',x_1',x_2')$,
$\langle x,x' \rangle=x_1 x_1'-x_0 x_2'-x_2 x_0'$. 
We write $x^2 := \innerpro{x}{x}$. 
For $E \in \mathrm{Coh}(A)$, $v(E) = \mathrm{ch}(E) \in H^{\ast}(A, \Z)_{\mathrm{alg}}$ is the Mukai vector of $E$. 
Mukai vector $v = (r, \xi, a)$ is called primitive if $\gcd(r, \xi, a) = 1$.

Let $A$ be an abelian surface with $\NS(A) = \Z H$ where $H$ is a general ample divisor on $A$
 and $H^2 = 2n, n \in \N$. 
We set $v = (1, 0, -l)$. 
It is easy see that
$$
v^\perp=\Z(0,H,0) \oplus \Z(1,0,l)
$$
and we get an isometry  
\begin{equation}
\begin{matrix}
\theta_v: & v^\perp & \to & \Z h \oplus \Z \delta\\
& (0,H,0) & \mapsto & h\\
& (1,0,l) & \mapsto & \delta
\end{matrix}
\end{equation}
By $\theta_v$, we shall identify $v^\perp$ with $\NS(\Km^{l-1}(A))$.
We set
$$
P^+:=\{x \in v^\perp \mid x^2>0, \langle x,h \rangle>0\}.
$$

\section{Movable Cone and Nef Cone}

We recall a description of $\Mov(\Km^{l-1}(A))$ and
$\Nef(\Km^{l-1}(A))$ in \cite{Yoshi1}. 
 We consider the set $\Gamma$ of Mukai vector $u$ satisfying  the inequality 
$$\innerpro{u}{v-u} > 0,\ u^2 \geq 0, \ \langle (v-u)^2 \rangle \geq 0, \ \innerpro{v}{u}^2 > v^2 u^2. $$
If $u \in \Gamma$, then $u^{\perp}$ is not empty (\cite[Proposition 1.3]{Yoshi1}). 
The connected component ${\cal C}$ of $P^+ \setminus \cup_{u \in \Gamma} u^\perp$ containing 
$h - \epsi \delta\ (0 < \varepsilon \ll 1)$
is the ample cone $\Amp(\Km^{l-1}(A))$ of $\Km^{l-1}(A)$ (\cite[Proposition 4.11]{Yoshi1}).
For
$$
\Gamma_m := \{ u \in \Gamma \mid u^2 = 0,\ \innerpro{u}{v} = 1\ or\ 2 \},
$$
let ${\cal C}'$ be the connected component of $P^+\setminus \cup_{u \in \Gamma_m} u^\perp$
containing ${\cal C}$.
Then 
\begin{equation}
\Nef(\Km^{l-1}(A))=\overline{\cal C},\;
\Mov(\Km^{l-1}(A))=\overline{\cal C'}.
\end{equation}



\section{The Calculation of Boundary of Cones}

In this section, we shall prove Theorem \ref{thm:main}. 
We keep the notation in section 2 unless otherwise stated.
We first prove that the boundaries of $\Nef(\Km^2(A))$ are defined by
an isotropic vector $u \in \Gamma$. 

\begin{lem}\label{lem:isotropic}
Let $v = (1, 0, -l)$ and $l \leq 4$. 
Then $u \in \Gamma$ satisfies one of the following conditions:
\begin{enumerate}
\item
$u^2=0$ and $0<\langle u,v \rangle \leq l$.
\item
$(v-u)^2=0$ and $0<\langle v-u,v \rangle \leq l$.
\end{enumerate}
\end{lem}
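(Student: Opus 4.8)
The plan is to encode the four defining inequalities of $\Gamma$ in the three integer invariants $\alpha := u^2/2$, $\gamma := (v-u)^2/2$ and $\beta := \langle v,u\rangle$, and then to exploit the constraint $l \le 4$ together with integrality. Writing $u = (r, sH, a)$ with $r,s,a \in \Z$ (legitimate since $\NS(A) = \Z H$), one computes $u^2 = 2(ns^2 - ra)$ and $(v-u)^2 = 2\bigl(ns^2 + (1-r)(l+a)\bigr)$, so $\alpha, \gamma \in \Z$; likewise $\beta = rl - a \in \Z$. Bilinearity of the Mukai pairing gives $(v-u)^2 = v^2 - 2\langle v,u\rangle + u^2$, i.e. the single relation $\gamma = l - \beta + \alpha$, which I will use throughout to eliminate $\beta$, writing $\beta = l + \alpha - \gamma$.

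First I would translate the inequalities. The conditions $u^2 \ge 0$ and $(v-u)^2 \ge 0$ say exactly $\alpha \ge 0$ and $\gamma \ge 0$. The inequality $\langle u, v-u\rangle > 0$ equals $\langle u,v\rangle - u^2 = \beta - 2\alpha > 0$, which after substituting $\beta = l + \alpha - \gamma$ collapses to the clean form $\alpha + \gamma < l$. Finally $\langle v,u\rangle^2 > v^2 u^2$ reads $\beta^2 > 4l\alpha$; substituting $\beta = l + \alpha - \gamma$ and expanding, the difference of the two sides factors as $(l - \alpha - \gamma)^2 - 4\alpha\gamma$, so this last condition is equivalent to $(l-\alpha-\gamma)^2 > 4\alpha\gamma$. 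The bulk of the work is this reformulation, and the factorisation of $\beta^2 - 4l\alpha$ is the one identity worth verifying carefully; it is precisely what makes the argument close.

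With these reformulations in hand the conclusion is forced. Set $t := l - \alpha - \gamma$; by $\alpha + \gamma < l$ and integrality, $t$ is a positive integer, and the last condition becomes $t^2 > 4\alpha\gamma$. Suppose for contradiction that $\alpha \ge 1$ and $\gamma \ge 1$. Then $4\alpha\gamma \ge 4$, while $t = l - \alpha - \gamma \le l - 2 \le 2$ because $l \le 4$, so $t^2 \le 4 \le 4\alpha\gamma$, contradicting $t^2 > 4\alpha\gamma$. Hence $\alpha = 0$ or $\gamma = 0$, that is $u^2 = 0$ or $(v-u)^2 = 0$. This step is the heart of the matter, and it is exactly here that the hypothesis $l \le 4$ is used; for larger $l$ the value $\alpha = \gamma = 1$ would be admissible and the dichotomy would fail.

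It then remains to read off the bounds on the pairings, which is immediate. If $\alpha = 0$ then $\beta = l - \gamma$, so $t = \beta$; positivity of $t$ gives $\langle u,v\rangle = \beta > 0$, and $\gamma \ge 0$ gives $\beta \le l$, which is case (1). If instead $\gamma = 0$ then $\langle v-u, v\rangle = v^2 - \langle u,v\rangle = 2l - \beta = l - \alpha = t$; positivity of $t$ and $\alpha \ge 0$ yield $0 < \langle v-u, v\rangle \le l$, which is case (2). I note in passing that the two cases are interchanged by the involution $u \mapsto v-u$, under which $\alpha \leftrightarrow \gamma$, $\beta \mapsto 2l-\beta$, and all four defining conditions of $\Gamma$ are preserved; so one could alternatively establish case (1) only and invoke this symmetry to obtain case (2).
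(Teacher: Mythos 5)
Your proof is correct and is essentially the paper's argument in different packaging: your reformulated system $\alpha,\gamma \ge 0$, $t = l-\alpha-\gamma > 0$, $t^2 > 4\alpha\gamma$ is exactly the paper's $u^2, w^2 \ge 0$, $\langle u,w\rangle > 0$, $\langle u,w\rangle^2 > u^2 w^2$ for $w = v-u$ (note $t = \langle u,w\rangle$ and your factorisation identity is precisely the equivalence $\langle v,u\rangle^2 - v^2u^2 = \langle u,w\rangle^2 - u^2w^2$), and both proofs conclude by combining $l \le 4$ with evenness of the Mukai pairing. The only difference is organisational: the paper reduces to one case via the WLOG $u^2 \le w^2$ (using the symmetry $u \mapsto v-u$ of $\Gamma$ that you note at the end), whereas you keep both variables and derive the dichotomy by a symmetric contradiction.
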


\begin{proof}
We set $w = v-u$. Since $w \in \Gamma$,
we may assume that $u^2 \leq w^2$.
We shall prove $u^2=0$ and $0<\langle u,v \rangle \leq l$.
Since $u^2 \leq w^2$, we have $\innerpro{u}{v} \leq l$ .
Since $\innerpro{u}{w} > 0$ and $u^2 \geq 0$, we get
$$0 \leq u^2 < \innerpro{u}{v} \leq l. $$ 

Next we prove $u^2 = 0$. 
Since $v^2 \geq 2(u^2 + \innerpro{u}{w})$ and $l \leq 4$, we have $4 \geq u^2+\innerpro{u}{w}$. 
$w$ lies in $\Gamma$, so we get $4 > u^2 + \sqrt{u^2w^2} > 2u^2$. 
Since $u^2$ is even, the statement follows. 
\end{proof}

From now on, we assume that $l=3$, that is,
$v=(1, 0, -3)$. 
We take $u \in \Gamma$ defining a wall.   
We may assume that $u^2 = 0$ and $\innerpro{u}{v} = 1, 2$ or $3$ by Lemma \ref{lem:isotropic}. 
If we assume $u-\lambda v \in v^{\perp}$ where $\lambda \in \R$, then we have 
$\lambda = \innerpro{u}{v}/6. $
Hence, $u$ is represented by 
\begin{equation}
u=\frac{\innerpro{u}{v}}{6}v +xh+y\delta,\ \rm{where}\  \it{x, y} \in \Q. 
\end{equation} 
Since $\frac{6}{\innerpro{u}{v}}u-v \in v^{\perp} \cap H^{\ast}(X, \Z)_{\mathrm{alg}}$, we get 
\begin{equation}\label{eq:u}
\frac{6}{\innerpro{u}{v}}u = v + Xh + Y\delta,\ \rm{where}\  X, Y \in \Z.
\end{equation}
Since $u$ is isotropic, $(X,Y)$ is a solution of \eqref{eq:Pell}
satisfying 
\begin{equation}\label{eq:Pell2}
\gcd(Y+1,X,6)=2,3,6.
\end{equation} 
Conversely for an integral solution of \eqref{eq:Pell} satisfying \eqref{eq:Pell2},
we have a primitive isotropic Mukai vector $u$
satisfying \eqref{eq:u}.
For the isotropic vector $u$, the wall $u^\perp$ in $P^+$
is $\R_{>0}(h-\frac{nX}{3Y}\delta)$.


The following lemma shows that the slope converges monotonically when Pell equation has infinitely many solutions. 

\begin{lem}\label{lem:slope}
Assume that
$\sqrt{n/3} \not \in \Q$.
Then  
$$
0=\frac{X_0}{Y_0}<\frac{X_1}{Y_1}<\frac{X_2}{Y_2}<\frac{X_3}{Y_3}<\cdots,\;
\lim_{k \to \infty}\frac{X_k}{Y_k}=\sqrt{\frac{3}{n}}.
$$
\end{lem}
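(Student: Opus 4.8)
The plan is to diagonalize the recursive definition of $(X_k,Y_k)$ by working in the real quadratic field $\Q(\sqrt{n/3})$. Set $\alpha := \sqrt{n/3}$ and let $\zeta := Y_1 + \alpha X_1$, with conjugate $\zeta' := Y_1 - \alpha X_1$. Rewriting the Pell equation \eqref{eq:Pell} as $Y^2 - \alpha^2 X^2 = 1$, that is $(Y+\alpha X)(Y-\alpha X) = 1$, shows in particular that $\zeta\zeta' = 1$. Since $X_1, Y_1 > 0$ and $\alpha > 0$ we have $\zeta > 1$, hence $0 < \zeta' = \zeta^{-1} < 1$. By definition $Y_k + \alpha X_k = \zeta^k$, and applying the nontrivial automorphism of $\Q(\sqrt{n/3})$ gives $Y_k - \alpha X_k = (\zeta')^k = \zeta^{-k}$.

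Solving these two linear relations for $X_k$ and $Y_k$ yields $Y_k = \tfrac12(\zeta^k + \zeta^{-k})$ and $X_k = \tfrac{1}{2\alpha}(\zeta^k - \zeta^{-k})$, so that
\begin{equation*}
\frac{X_k}{Y_k} = \frac{1}{\alpha}\cdot\frac{\zeta^{k} - \zeta^{-k}}{\zeta^{k} + \zeta^{-k}} = \frac{1}{\alpha}\cdot\frac{\zeta^{2k}-1}{\zeta^{2k}+1}.
\end{equation*}
For $k=0$ this reads $X_0/Y_0 = 0$, consistent with $(X_0,Y_0)=(0,1)$. The ratios are well defined because $Y_k > 0$ for all $k$, a fact already recorded before Theorem \ref{thm:main}.

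From this closed form both assertions are immediate. I would note that the map $t \mapsto (t-1)/(t+1) = 1 - 2/(t+1)$ is strictly increasing for $t > 0$, while $t = \zeta^{2k}$ is strictly increasing in $k$ because $\zeta > 1$; composing these gives that $k \mapsto X_k/Y_k$ is strictly increasing, which is the claimed chain of inequalities. For the limit, $\zeta^{2k} \to \infty$ as $k \to \infty$, so $(\zeta^{2k}-1)/(\zeta^{2k}+1) \to 1$ and therefore $X_k/Y_k \to 1/\alpha = \sqrt{3/n}$.

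There is no genuine obstacle here: the entire content is the passage to the conjugate and the resulting closed-form expression for $X_k/Y_k$, after which monotonicity and the limit are one-line consequences. The only points requiring a word of care are the sign facts $\zeta > 1$ and $Y_k > 0$, both of which follow at once from $X_1, Y_1 > 0$.
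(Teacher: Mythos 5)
Your proof is correct. Note that the paper itself states this lemma without any proof (the text passes directly from the lemma to the proof of Theorem \ref{thm:main}), so there is no argument to compare against; your conjugation argument in $\Q(\sqrt{n/3})$ supplies exactly the missing standard proof. The closed forms $Y_k = \tfrac12(\zeta^k+\zeta^{-k})$, $X_k = \tfrac{1}{2\alpha}(\zeta^k-\zeta^{-k})$ are legitimate because $X_k, Y_k$ are rational (they are the coordinates of $\zeta^k$ in the $\Q$-basis $\{1,\alpha\}$ of $\Q(\sqrt{n/3})$, and the paper records that they are in fact positive integers), so the nontrivial automorphism $\alpha \mapsto -\alpha$ may be applied coefficientwise; after that, monotonicity via $t \mapsto (t-1)/(t+1)$ and the limit $1/\alpha = \sqrt{3/n}$ are immediate, as you say.
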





{\it Proof of Theorem \ref{thm:main}.}
We divide the proof into two cases. \quad \\
(1) We assume that $3 \nmid n$.
In this case, $\sqrt{3n} \not \in \Q$. 
Then  since $3 \mid X$, we get $(X_1,Y_1)=(3Z_1,Y_1)$, where
$(Z_1,Y_1)$ is the fundamental solution of Pell equation
$Y^2-(3n)Z^2=1$. 
By $3 \mid X_1$, we get $Y_1 \equiv \pm 1 \mod 3$.
Hence $\gcd(Y+1,X,6)=3,6$ for $(X,Y)=(X_1,Y_1),(-X_1,-Y_1)$.
Moreover $\gcd(Y+1,X,6)=6$ if and only if $2 \mid X_1$.
Therefore 
$$
u=
\begin{cases}
(\tfrac{\pm Y_1+1}{3}, \pm \tfrac{X_1}{3}H, \pm Y_1-1), & \text{if } 2 \nmid X_1\\
 (\tfrac{\pm Y_1+1}{6}, \pm \tfrac{X_1}{6}H, \tfrac{\pm Y_1-1}{2}), & \text{if } 2 \mid X_1
\end{cases}
$$
Hence we get $\nef = \mov$.

(2) We consider the case of $n=3m$ where $m$ is not a square number. 
Then \eqref{eq:Pell} is equivalent to the Pell equation $Y^2-(n/3)X^2=1$. 
We divide this case into four cases. 

(i)
We assume that $3 \mid X_1$.
Then 
we can get the statement as with the case of $3 \nmid n$.  

(ii)
We assume that $2 \mid X_1$ and $3 \nmid X_1$. 
Since $Y_1^2 \equiv 1 \mod 2$,
$\gcd(1\pm Y_1,\pm X_1,6)=2$ and
$\innerpro{u}{v} = 3$ for 
$u = (\frac{\pm Y_1+1}{2}, \pm \frac{X_1}{2}H, \frac{3}{2}(\pm Y_1-1))$. 
In order to determine a movable cone, we must consider next solution $(X_2, Y_2) = (2X_1Y_1, Y_1^2+mX_1^2) $ by Lemma \ref{lem:slope}. 

\begin{itemize}
 \item[$\bullet$] 
Assume that $3 \mid Y_1$. 
Since $Y_2=Y_1^2+mX_1^2=2Y_1^2-1$,
$\gcd(Y_2+1,X_2,6)=6$ and $\langle v,u \rangle=1$ for
$u=(\frac{Y_1^2}{3},\frac{Y_1 X_1}{3}H,m X_1^2)$.
 \item[$\bullet$] 
If $3 \nmid Y_1$, then $\gcd (1 \pm Y_2, \pm X_2,6) = 2$. 
Thus, we consider next solution $(X_3, Y_3) = (3X_1Y_1^2+mX_1^3, Y_1(Y_1^2+3mX_1^2))$. 
Note that $3 | m$ since $3 \nmid X_1, 3 \nmid Y_1$ and $Y_1^2-mX_1^2 = 1$. 
Since either $Y_3+1$ or $Y_3-1$ is a multiple of $54$, we have $\innerpro{u}{v} = 1$ for 
$u = (\frac{\pm Y_3+1}{6}, \pm \frac{X_3}{6}H, \frac{\pm Y_3-1}{2})$.  
\end{itemize}

(iii)
We assume that $2 \nmid X_1$ and $3 \nmid X_1$. 
Since $\gcd(1\pm Y_1, \pm X_1,6) = 1$, we consider next solution $(X_2, Y_2)$. 
\begin{itemize}
\item[$\bullet$] 
If $3 \mid Y_1$, then we also see that 
$\gcd(Y_2+1,X_2,6)=6$ and
$u=(\frac{Y_1^2}{3},\frac{Y_1 X_1}{3}H,m X_1^2)$.
\item[$\bullet$] 
If $3 \nmid Y_1$, then 
$\gcd (Y_2+1, X_2,6) = 2$
and $\langle u,v \rangle=3$ for
$u=(Y_1^2,Y_1 X_1 H,3m X_1^2)$.
We consider next solution $(X_3, Y_3)$. 
Since $X_3=X_1(3Y_1^2+mX_1^2)=X_1(4Y_1^2-1)$,
$2 \nmid X_3$.
By $Y_1^2 \equiv 1 \mod 3$ and $3 \nmid X_1$,
$3 \mid m$. Hence
we also have $3 \mid X_3$.
Therefore $\gcd(Y+1,X,6)=3$ and
$u=(\frac{Y+1}{3}, \frac{X}{3}H, Y-1)$
for $(X,Y)=(X_3,Y_3)$ or $(X,Y)=(-X_3,-Y_3)$. 
\end{itemize}

(iv)
We assume that $m$ is square number. 
Then the statement is showed by \cite[Proposition 4.16]{Yoshi1} and the fact that 
\eqref{eq:Pell} has only trivial solutions $(0, \pm 1)$. 
\qed
\begin{caution}\rm{
We state the characterization of solutions of Pell equation $Y^2 -m X^2 = 1$ by $m$: 
$$3 | m\ \Longrightarrow \ 3 \nmid Y,\ \ \ m = 3k+1\ \Longrightarrow \ 3|X\ \rm{and}\ 3 \nmid Y, $$
$$m = 3k+2\ \Longrightarrow \ 3 | X\ \rm{and}\ 3 \nmid Y,\ \rm{or}\ 3 \nmid X\ \rm{and}\ 3 | Y .$$
Actually, they follow by solving the Pell equation in the residue field $\mathbb{F}_3$. 
In particular, we can see that $3 | X_1$ if $m = 3k+1$. 
}
\end{caution}

\begin{cor}
If $3 \nmid n$ or $n \equiv 3 \mod 9$, then
$\Nef(\Km^2(A))=\Mov(\Km^2(A))$.
\end{cor}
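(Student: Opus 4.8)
The plan is to deduce the corollary directly from the case analysis tabulated in Theorem \ref{thm:main}, after translating each hypothesis into a condition on the fundamental solution $(X_1, Y_1)$ of \eqref{eq:Pell} and then verifying that the matching row of the table assigns the identical expression to the $\nef$ and $\mov$ columns.

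First I would dispose of the case $3 \nmid n$. Here $\sqrt{n/3} \notin \Q$ (indeed $3n$ carries an odd power of $3$, so it is not a square), and the table has a single applicable block, in which both the nef cone and the movable cone are recorded as $\R_{\geq 0}h + \R_{\geq 0}(h - \tfrac{nX_1}{3Y_1}\delta)$. Since the two entries coincide, $\Nef(\Km^2(A)) = \Mov(\Km^2(A))$ is immediate.

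Next I would treat $n \equiv 3 \pmod 9$. Writing $n = 3m$, this congruence is equivalent to $m \equiv 1 \pmod 3$, i.e. $m = 3k+1$. If $m$ is a perfect square, then \eqref{eq:Pell} has only the trivial solutions $(0, \pm 1)$, and the last row of the table yields $\Nef(\Km^2(A)) = \Mov(\Km^2(A))$ with common boundary $h - \sqrt{n/3}\,\delta$. If $m$ is not a square, then \eqref{eq:Pell} reduces to $Y^2 - mX^2 = 1$, and I would invoke the Remark: solving this equation modulo $3$ in $\FF_3$ shows that $m = 3k+1$ forces $3 \mid X_1$. This places us precisely in the row labelled ``$n = 3m$, $m$ not square, $3 \mid X_1$'', where the $\nef$ and $\mov$ columns again carry the identical expression $\R_{\geq 0}h + \R_{\geq 0}(h - \tfrac{nX_1}{3Y_1}\delta)$. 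Hence $\Nef(\Km^2(A)) = \Mov(\Km^2(A))$ in every subcase.

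The only step beyond bookkeeping is the number-theoretic implication $m \equiv 1 \pmod 3 \Rightarrow 3 \mid X_1$, which is exactly the content of the Remark and is therefore already available. I do not anticipate a genuine obstacle here: once each hypothesis is routed to the correct row, the conclusion reduces to the observation that in those rows the two cone expressions agree. The slight subtlety worth flagging is the bifurcation according to whether $m$ is a square, since the square case is governed by the degenerate last row of the table rather than by a nontrivial fundamental solution; both branches, however, land on $\Nef = \Mov$.
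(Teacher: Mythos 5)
Your proposal is correct and follows exactly the route the paper intends: the case $3 \nmid n$ reads off the first row of the table in Theorem \ref{thm:main}, while for $n \equiv 3 \pmod 9$ (i.e.\ $n=3m$, $m \equiv 1 \pmod 3$) one either lands in the square-$m$ row or uses the Remark's $\FF_3$-argument ($m=3k+1 \Rightarrow 3 \mid X_1$) to land in the $3 \mid X_1$ row, and in every case the nef and movable entries coincide. Your handling of the square/non-square bifurcation and the observation that $\sqrt{n/3}\notin\Q$ when $3\nmid n$ are exactly the bookkeeping the paper leaves implicit.
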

\begin{example}\rm{
Let $n=1$. 
Then $(X, Y)$ satisfies the Pell equation $Y^2 -3(X/3)^2=1$. 
The minimum solution of this equation is $(X_1, Y_1) = (3, 2)$. 
Since $Y_1+1=3$ is a multiple of 3, 
$$\frac{6}{\innerpro{u}{v}}u=v+2h+3\delta = (3, 3H, 3) = 3(1, H, 1), $$
and $\innerpro{u}{v}=2$. 
That is, the boundaries of $\mov$ and $\nef$ are determined and matched for $u=(1, H, 1)$. 
Moreover, the slope of $u^{\perp}$ monotonically converges to $\sqrt{3}/3$. 
We illustrate the movement that the walls monotonically converge to the boundary of positive cone (see Fig 1). 
}

\begin{figure}[H]
\begin{center}
\begin{overpic}[width=100mm]{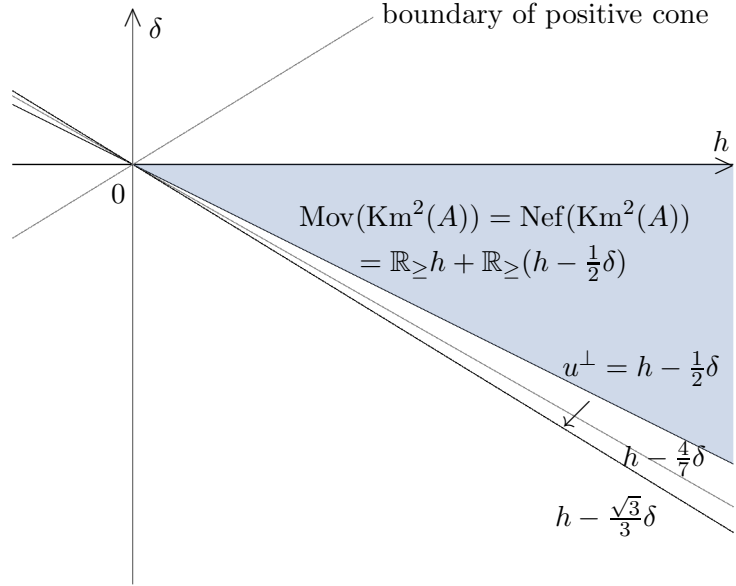}
 \put(20,75){$\delta$} 
 \put(15,53){$0$}   
 \put(51,77){boundary of positive cone} 
 \put(40,50){$\mov = \nef$}
 \put(48,44){$ = \R_{\geq}h + \R_{\geq}(h-\frac{1}{2}\delta)$}
 \put(95,60){$h$}
 \put(74,10){$h-\frac{\sqrt{3}}{3}\delta$} 
 \put(75,30){$u^{\perp} = h-\frac{1}{2}\delta$} 
 \put(83,18){$h-\frac{4}{7}\delta$} 
 \put(75,24){$\swarrow$} 
 \put(95,57){$>$}
 \put(16.6,76.5){$\wedge$}
\end{overpic}
\end{center}
\caption{{\footnotesize The walls monotonically converge to the boundary of positive cone}}
\end{figure}

\end{example}
By the proof of Theorem \ref{thm:main},
we have the following.
\begin{cor}\label{cor:H-C}
Assume that $\sqrt{3n} \not \in \Q$.
In the following cases, the boundaries of $\Mov(\Km^2(A))$ are given by 
Hilbert-Chow contractions: 
\begin{enumerate}
\item
$2 \mid X_1$.
\item
$3 \mid n$, $X_1 \equiv \pm 1 \mod 6$ and $3 \mid Y_1$.
\end{enumerate}
\end{cor}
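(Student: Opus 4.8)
The plan is to reduce the assertion to a single numerical condition on the isotropic Mukai vector cutting out the non-trivial boundary of the movable cone. First I would record that one extremal ray of $\Mov(\Km^2(A))$ is always $\R_{\geq 0}h$: indeed $h=(0,H,0)$ is the pull-back of an ample class from $S^3(A)$, and the hyperplane $h^{\perp}\cap v^{\perp}$ is cut out by the isotropic vector $(0,0,1)$, for which $\langle v,(0,0,1)\rangle=-1$; this ray is therefore the Hilbert--Chow morphism contracting $D\in|2\delta|$. Consequently only the second boundary ray $\R_{\geq 0}(h-\tfrac{nX}{3Y}\delta)$, cut out by a primitive isotropic $u\in\Gamma$, must be examined. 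By the description recalled in Section 2 (cf. \cite{Yoshi1}), a divisorial contraction attached to such a $u$ is of Hilbert--Chow type exactly when $\langle u,v\rangle=1$ (the value $2$ giving a Li--Gieseker--Uhlenbeck type contraction instead).

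Next I would translate $\langle u,v\rangle=1$ into arithmetic. From \eqref{eq:u} we have $\tfrac{6}{\langle u,v\rangle}u=v+Xh+Y\delta=(1+Y,XH,3(Y-1))$, and primitivity of $u$ forces $\tfrac{\langle u,v\rangle}{6}=\tfrac{1}{\gcd(1+Y,X,3(Y-1))}$. Writing $3(Y-1)=3(Y+1)-6$ and using $\gcd(Y+1,X)\mid 3(Y+1)$ one gets $\gcd(1+Y,X,3(Y-1))=\gcd(Y+1,X,6)$, hence
\[
\langle u,v\rangle=\frac{6}{\gcd(Y+1,X,6)}.
\]
Thus the non-trivial boundary is a Hilbert--Chow contraction if and only if $\gcd(Y_k+1,X_k,6)=6$ for the solution $(X_k,Y_k)$ that, by Theorem \ref{thm:main}, governs the movable boundary (the hypothesis $\sqrt{3n}\notin\Q$ rules out the case ``$m$ square'', so by Lemma \ref{lem:slope} such a genuine solution exists).

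It then remains to inspect the branches already produced in the proof of Theorem \ref{thm:main}, where the relevant $u$ and the value $\langle u,v\rangle$ are recorded explicitly. If $2\mid X_1$ and $3\mid X_1$ (the latter being automatic when $3\nmid n$), the boundary comes from $(X_1,Y_1)$ with $u=(\tfrac{Y_1+1}{6},\tfrac{X_1}{6}H,\tfrac{Y_1-1}{2})$ and $\langle u,v\rangle=1$; if $2\mid X_1$ and $3\nmid X_1$ (case (ii), forcing $3\mid n$) the boundary comes from $(X_2,Y_2)$ when $3\mid Y_1$ and from $(X_3,Y_3)$ when $3\nmid Y_1$, and the $u$ exhibited there again has $\langle u,v\rangle=1$; together these give condition (1). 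If $3\mid n$, $X_1\equiv\pm1\bmod 6$ and $3\mid Y_1$ we are in case (iii) with $3\mid Y_1$, where the boundary comes from $(X_2,Y_2)$ with $u=(\tfrac{Y_1^2}{3},\tfrac{Y_1X_1}{3}H,mX_1^2)$ and once more $\langle u,v\rangle=1$; this gives condition (2). I expect the main obstacle to be bookkeeping rather than new computation: since each $u$ and each $\langle u,v\rangle$ is already displayed in the proof of Theorem \ref{thm:main}, the real task is to pair the branches correctly with conditions (1) and (2), to track via Lemma \ref{lem:slope} which solution index controls the movable ray in each branch, and to confirm that the complementary branches (those with $2\nmid X_1$ giving $\langle u,v\rangle=2$) are precisely the non--Hilbert--Chow ones, so that the list is neither too long nor too short.
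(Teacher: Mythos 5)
Your proposal is correct and takes essentially the same route as the paper: the paper offers no separate argument (the corollary is stated as following ``by the proof of Theorem \ref{thm:main}''), and your proof is exactly that reading --- identify Hilbert--Chow walls with primitive isotropic $u$ satisfying $\langle u,v\rangle=1$, equivalently $\gcd(Y+1,X,6)=6$, and observe that the branches with $2\mid X_1$, respectively with $3\mid n$, $X_1\equiv\pm1\bmod 6$ and $3\mid Y_1$, are precisely those whose movable-cone boundary vector (from $(X_1,Y_1)$, $(X_2,Y_2)$ or $(X_3,Y_3)$ as recorded there) has $\langle u,v\rangle=1$. The only additions are your explicit justifications of the wall-type dictionary and of the gcd formula, both of which the paper leaves implicit.
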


\section{Chamber decomposition of $\Mov(\Km^2(A))$}\label{sect:chamber}

For $\Mov(\Km^2(A))$, we have a chamber decomposition
such that each chamber is an ample cone of a minimal model of $\Km^2(A)$.
We shall describe the decomposition.
By Theorem \ref{thm:main},
it is sufficient to treat the following 3 cases.
For the other cases, $\Nef(\Km^2(A))=\Mov(\Km^2(A))$.
\begin{enumerate}
\item[(1)]
$3 \nmid X_1$, $2 \mid X_1$ and $3 \mid Y_1$.
\item[(2)]
$3 \nmid X_1$, $2 \mid X_1$ and $3 \nmid Y_1$.
\item[(3)]
$3 \nmid X_1$, $2 \nmid X_1$ and $3 \nmid Y_1$.
\end{enumerate}

Case (1).
Since $Y_1^2-m X_1^2=1$,
$m \equiv -1 \mod 3$.
Then $\Mov(\Km^2(A))$ has two chambers 
\begin{equation}
\begin{split}
{\cal C}_1:=& \R_{>0} h+\R_{>0} (h-\tfrac{nX_1}{3Y_1}),\\
{\cal C}_2:=& \R_{>0} (h-\tfrac{nX_1}{3Y_1})+\R_{>0} (h-\tfrac{nX_2}{3Y_2}).
\end{split}
\end{equation}
Let $M_1:=\Km^2(A), M_2$ be the minimal models such that
$\Amp(M_i)={\cal C}_i$.
Then $\langle u,(1,0,-3) \rangle$ 
$=1$ for 
$u=(\frac{Y_1^2}{3},\frac{Y_1 X_1}{3}, mX_1^2)$
and
$M_2 \cong \Km^2(A')$, where $A':=M_H(u)$.
$M_2$ is a flop of $M_1$ along copies of ${\Bbb P}^2$.
If $\Endo(A) \cong \Z$, then
Lemma \ref{lem:abel} implies $A' \not \cong A$ and $M_1 \not \cong M_2$.

Case (2).
$3 \mid m$ and
$\Mov(\Km^2(A))$ is divided into 3 chambers 
\begin{equation}
\begin{split}
{\cal C}_1:=& \R_{>0} h+\R_{>0} (h-\tfrac{nX_1}{3Y_1}),\\
{\cal C}_2:=& \R_{>0} (h-\tfrac{nX_1}{3Y_1})+\R_{>0} (h-\tfrac{nX_2}{3Y_2}),\\
{\cal C}_2:=& \R_{>0} (h-\tfrac{nX_2}{3Y_2})+\R_{>0} (h-\tfrac{nX_3}{3Y_3}).
\end{split}
\end{equation}
Let $M_1=\Km^2(A), M_2, M_3$ be the minimal models with $\Amp(M_i)={\cal C}_i$
By Corollary \ref{cor:H-C},
$M_3 \cong \Km^2(A')$, where
$A'=M_H(u)$.  If  $\Endo(A) \cong \Z$ and
$u \ne (na^2, abH,b^2)$, then
Lemma \ref{lem:abel} implies $A' \not \cong A$ and $M_1 \not \cong M_3$.

Case (3).
$3 \mid m$ and
$\Mov(\Km^2(A))$ has two chambers 
\begin{equation}
\begin{split}
{\cal C}_1:=& \R_{>0} h+\R_{>0} (h-\tfrac{nX_2}{3Y_2}),\\
{\cal C}_2:=& \R_{>0} (h-\tfrac{nX_2}{3Y_2})+\R_{>0} (h-\tfrac{nX_3}{3Y_3}).
\end{split}
\end{equation}
Let $M_1:=\Km^2(A), M_2$ be the minimal models such that
$\Amp(M_i)={\cal C}_i$.
By the proof of Theorem \ref{thm:main} case (iii),
$\langle u,(1,0,-3) \rangle \ne \pm 1$.
Therefore $M_1 \not \cong M_2$.

\begin{lem}\label{lem:abel}
\begin{enumerate}
\item[(1)]
For a solution $(X,Y)$ of \eqref{eq:Pell},
$u=(na^2,abH,b^2)$ with $\langle u,(1,0,-3) \rangle =\pm 1$ if and only if
$Y \equiv -1 \mod p$ for all prime divisors $p>2$ of $n$  
and $Y \equiv -1 \mod 4$ if $n$ is even.
\item[(2)]
Assume that $\Endo(A) \cong \Z$.
Then $M_H(u) \cong A$ if and only if
$(s,t)=(n,1)$. 
\item[(3)]
We assume that
$\Endo(A) \cong \Z$ and
$\Nef(\Km^2(A)) \ne \Mov(\Km^2(A))$. Then  
$\Km^2(M_H(u))$ 
$\cong \Km^2(A)$ if and only if
$Y \equiv -1 \mod p$ for all prime divisors $p>2$ of $n$  
and $Y \equiv -1 \mod 4$ if $n$ is even.
\end{enumerate}
\end{lem}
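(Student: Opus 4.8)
The three parts build on one another: (3) combines (1) and (2) with a Torelli-type criterion, so I would establish them in the order (1), (2), (3).

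Part (1) I would reduce to elementary number theory. Writing the primitive isotropic $u$ attached to the solution $(X,Y)$ via \eqref{eq:u}, one has $\frac{6}{\innerpro{u}{(1,0,-3)}}u = (1+Y,\,XH,\,3(Y-1))$. Imposing $u=(na^2,abH,b^2)$ gives $\innerpro{u}{(1,0,-3)}=3na^2-b^2$; when this equals $\pm1$ the relation \eqref{eq:u} reads $\pm6u=(1+Y,XH,3(Y-1))$, and comparing coordinates yields (up to the overall sign) $Y+1=6na^2$ and $Y-1=2b^2$. Thus the representation exists exactly when $\tfrac{Y-1}{2}$ is a perfect square and $\tfrac{Y+1}{2}$ equals $3n$ times a square. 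To turn these square conditions into the stated congruences I would use that $\tfrac{Y-1}{2}$ and $\tfrac{Y+1}{2}$ are coprime (consecutive integers) and that their product times $12$ equals $nX^2$ by \eqref{eq:Pell}; by unique factorisation the primes of $n$ must all be absorbed by one of the two coprime factors, and $Y\equiv-1\pmod p$ for odd $p\mid n$ (resp. $Y\equiv-1\pmod 4$ for $n$ even) is precisely the statement that they are absorbed by $\tfrac{Y+1}{2}$, which then forces $\tfrac{Y-1}{2}$ to be a square. The converse is immediate. The delicate bookkeeping is at $p=2$ and $p=3$, where the explicit factors $2,3$ in $\innerpro{u}{(1,0,-3)}$ and in \eqref{eq:Pell} interfere with the square/non-square dichotomy; this I expect to be the most technical point of (1).

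For part (2) I would invoke Yoshioka's description: for primitive isotropic $u$, $M_H(u)$ is an abelian surface whose Néron--Severi group and polarisation are computed from $u^{\perp}/\Z u$. Writing $u=(s\alpha^2,\alpha\beta H,t\beta^2)$ with $st=n$ (so that $u^2=0$), the surface $M_H(u)$ has type $(s,t)$, with $A$ itself corresponding to $(s,t)=(n,1)$ (the vector $(0,0,1)$) and its dual $\hat{A}$ to $(1,n)$ (the vector $(1,0,0)$). Under $\Endo(A)\cong\Z$ the surface $A$ is as generic as possible, so any isomorphism $M_H(u)\cong A$ must carry the rank-one Néron--Severi group and its self-intersection to those of $A$; this rigidity rules out isomorphisms arising from nontrivial isogenies and pins the type to $(s,t)=(n,1)$, the converse being the standard fact that the moduli space of type $(n,1)$ recovers $A$.

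For part (3) I would use the Torelli-type criterion that an isomorphism $\Km^2(B)\cong\Km^2(A)$ induces a Hodge isometry of second cohomology and hence, after splitting off $\delta$, a Hodge isometry $H^2(B,\Z)\cong H^2(A,\Z)$; by the Torelli theorem for abelian surfaces this gives $B\cong A$ or $B\cong\hat{A}$. In particular $\Km^2(\hat{A})\cong\Km^2(A)$, so $\Km^2(M_H(u))\cong\Km^2(A)$ holds precisely when $M_H(u)\cong A$ or $M_H(u)\cong\hat{A}$, i.e. when the type of $u$ is $(n,1)$ or $(1,n)$. By parts (1) and (2) the case $(n,1)$ is exactly the asserted congruence; the remaining task is to show that, under the hypothesis $\Nef(\Km^2(A))\neq\Mov(\Km^2(A))$ and the normalisation $Y>0$ from Lemma \ref{lem:slope}, the type $(1,n)$ walls either coincide with the Hilbert--Chow boundary (the vector $(1,0,0)$, of slope $0$) or reproduce the same congruence, so that no extra solutions appear. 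Verifying this reconciliation of the dual case with the single congruence — keeping track of the $\pm1$ ambiguity at the primes of $n$ under $u\mapsto$ its dual — is, to my mind, the main obstacle; the number-theoretic content of (1) is then the supporting computation.
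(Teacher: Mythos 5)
Your parts (1) and (2) are essentially the paper's own argument: (1) is the same elementary comparison of $\pm 6u=(1+Y,XH,3(Y-1))$ with $u=(na^2,abH,b^2)$, pushing the odd primes of $n$ into the factor $\tfrac{Y+1}{2}$; and for (2) the paper simply cites \cite[Lemma 7.3]{YY1}, which is the fact your genericity sketch is standing in for.

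Part (3), however, rests on a false step. You claim that an isomorphism of generalized Kummer fourfolds is governed by a Torelli-type criterion and, in particular, that $\Km^2(\hat{A})\cong\Km^2(A)$. This is exactly what fails: Namikawa's paper, cited as \cite{Nam1} here and used in the paper's proof of this very lemma, shows that $\Km^2(A)$ and $\Km^2(\hat{A})$ have Hodge-isometric second cohomology yet are in general \emph{not} isomorphic (not even bimeromorphic) --- this pair is the standard counterexample to global Torelli for irreducible symplectic manifolds. A Hodge isometry of $H^2$ forces bimeromorphy only when it is a parallel-transport operator, and the natural isometry relating $\Km^2(A)$ and $\Km^2(\hat{A})$ is not one. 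Consequently your equivalence ``$\Km^2(M_H(u))\cong\Km^2(A)$ iff $M_H(u)\cong A$ or $M_H(u)\cong\hat{A}$'' is wrong in the ``if'' direction, and the ``reconciliation'' of the type $(1,n)$ case that you postpone cannot be carried out: that case never yields an isomorphism, and proving so requires precisely the geometric input you are missing. (A secondary gap: even in your ``only if'' direction, a Hodge isometry need not send $\delta$ to $\pm\delta$, so ``splitting off $\delta$'' is unjustified.) The paper avoids Hodge theory entirely: under the hypothesis $\Nef(\Km^2(A)) \ne \Mov(\Km^2(A))$, the Hilbert--Chow wall is the unique boundary wall of the nef cone giving a divisorial contraction, so any isomorphism $\Km^2(M_H(u))\cong\Km^2(A)$ must preserve the Hilbert--Chow contractions and hence restricts to an isomorphism of the exceptional divisors; by \cite{Nam1} the Albanese varieties of these divisors are $M_H(u)$ and $A$ respectively, whence $M_H(u)\cong A$, and then (2) and (1) finish. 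You need this argument (or some other one that genuinely distinguishes $\Km^2(\hat{A})$ from $\Km^2(A)$) in place of the Torelli claim.
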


\begin{proof}
(1)
We write $u=(s a^2,abH,tb^2)$ with $st=n$.
$3na^2-b^2=\pm 1$ implies $p \nmid (Y-1)$ for all prime divisors $p>2$ of $n$.
Moreover if $2 \mid n$, then $(Y-1)/2$ is odd.

Conversely if the conditions hold, then 
$\gcd(s,t)=1$. Hence $p \nmid (Y-1)$ for all prime divisors $p>2$ of $n$.
Moreover if $2 \mid n$, then $(Y-1)/2$ is odd.
Therefore $s=n$.

(2) The first claim is a consequence of \cite[Lemma 7.3]{YY1}.
(3) If $\Km^2(M_H(u)) \cong \Km^2(A)$, then 
the isomorphism preserves the Hilbert-Chow contractions.
In particular the isomorphism induces an isomorphism
of the exceptional divisors.
Since the Albanese varieties are $M_H(u)$ and $A$ respectively \cite{Nam1},
we have $M_H(u) \cong A$.
\end{proof}

\section{Movable Cones of a Generalized Kummer manifold}\label{sect:appendix}

As an appendix,
we calculate $\Mov(\Km^{l-1}(A))$ for an abelian surface $A$ with $\rho(A)=1$.
Let $u$ be the Mukai vector which determines the non-trivial boundary of $\Mov(\Km^{l-1}(A))$.
By using the same way as in the case of $l=3$, we have 
$$
\frac{2l}{\innerpro{u}{v}}u = v + Xh + Y\delta = (1+Y, XH, l(Y-1)),
$$
where $X, Y \in \Z$. 
Since $u$ is an isotropic vector, we have $lY^2 -n X^2 = l$. 
Moreover, since $u$ satisfies $\innerpro{u}{v} = 1, 2$, we have $l \mid X$.
Let $X = lZ$. 
Then we have
\begin{equation}\label{eq:Pell3}  
Y^2 -lnZ^2 = 1
\end{equation}
Let $(Z_1, Y_1)$ be the minimum solution of \eqref{eq:Pell3}. 
Then it satisfies 
$$
Y_1^2 -ln{Z_1}^2 = 1 \Longleftrightarrow (Y_1+1)(Y_1-1) = lnZ_1^2.
$$
\begin{itemize}
\item[(1)] 
If $l | (Y_1+1)$ or $l | (Y_1-1)$, let $Y_1 \pm 1 = kl$, where $k \in \N$. 
The vectors 
$$
(1\pm Y_1, \pm X_1H, l(\pm Y_1-1)) = (\pm kl, \pm lZ_1H, l(\pm kl - 2)) = \pm l(k, Z_1H, kl \mp 2)
$$
are divided by $l$. 
Then $\gcd (k, Z_1, kl \mp 2) = \gcd (k, Z_1, 2) = 1$ or $2$. 
If $\gcd (k, Z_1, 2) = 1$, 
we have $\innerpro{u}{v} = 2$ for $u = (\frac{1\pm Y_1}{l}, \pm \frac{X_1}{l}H, \pm Y_1-1)$.
If $\gcd (k, Z_1, 2) = 2$, 
we have $\innerpro{u}{v} = 1$ for $u = (\frac{1\pm Y_1}{2l}, \pm \frac{X_1}{2l}H, \frac{\pm Y_1-1}{2})$.  
 \item[(2)] 
If $l \nmid (Y_1 \pm 1)$, we consider the next solution $(Z_2, Y_2) = (2Y_1 Z_1, Y_1^2 + lnZ_1^2)$. 
Since $Y_1^2-lnZ_1^2=1$, we see that
$$
(1-Y_2,-X_2 H,l(-Y_2-1))=-2l(n Z_1^2,Z_1 Y_1 H,Y_1^2).
$$
Hence $\langle u,v \rangle=1$ for
$u=-(nZ_1^2, Z_1 Y_1 H,Y_1^2)$.
%
%
\end{itemize}

Thus, we have the following theorem. 

\begin{thm}\label{thm:general}
\begin{enumerate}
\item
Assume that $\sqrt{ln} \not \in \Q$.
Let $u$ be the Mukai vector which determines the boundary of movable cones of $\mathrm{Km}^{l-1}(A)$. 
\begin{enumerate}
\item[(1)] Assume that $l \mid Y+1$ for $Y=Y_1$ or $Y=-Y_1$.
We set $X=X_1$ or $-X_1$ according as $Y=Y_1$ or $Y=-Y_1$. 
Then,  
\begin{itemize}
\item[if] $\gcd(\frac{Y+1}{l},\frac{X}{l},Y-1)=1$, 
$\innerpro{u}{v} = 2$ for 
$u = (\frac{Y+1}{l}, \frac{X}{l}H, Y-1)$.
\item[if] $\gcd(\frac{Y+1}{l},\frac{X}{l},Y-1)=2$, 
$\innerpro{u}{v} = 1$ for 
$u = (\frac{Y+1}{2l}, \frac{X}{2l}H, \frac{Y-1}{2})$. 
\end{itemize}
\item[(2)] Let $l \nmid (Y_1 \pm 1)$. 
Then we have $\innerpro{u}{v} = 1$ for 
$u=-(nZ_1^2, Z_1 Y_1 H,Y_1^2)$.
\end{enumerate}
\item
Assume that $\sqrt{ln}  \in \Q$.
Then $\Mov(\Km^{l-1}(A))=\R_{\geq 0}h+\R_{\geq 0}(h-\sqrt{\frac{n}{l}}\delta)$.
\end{enumerate}

\end{thm}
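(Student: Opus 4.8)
The plan is to identify the non-trivial boundary ray of $\Mov(\Km^{l-1}(A))$ through the criterion recalled in Section~2: this boundary is $u^{\perp}\cap P^{+}$ for the isotropic Mukai vector $u\in\Gamma_m$, i.e.\ $u^2=0$ and $\innerpro{u}{v}\in\{1,2\}$, whose wall lies closest to the ample chamber $\mathcal C$ and hence has the smallest slope. Any candidate $u$ sits in $v^{\perp}$ up to a rational multiple of $v$, so I would first normalise it exactly as in the computation preceding the statement, writing $\frac{2l}{\innerpro{u}{v}}u=v+Xh+Y\delta=(1+Y,XH,l(Y-1))$ with $X,Y\in\Z$. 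Isotropy $u^2=0$ yields $lY^2-nX^2=l$, and integrality of $u$ with $\innerpro{u}{v}\in\{1,2\}$ forces $l\mid X$; putting $X=lZ$ gives precisely the Pell equation \eqref{eq:Pell3}, $Y^2-lnZ^2=1$, while the wall $u^{\perp}$ has slope $nX/(lY)$.

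The second ingredient is monotonicity: the slopes $nX_k/(lY_k)$ of the successive solutions $(Z_k,Y_k)$ strictly increase and converge to $\sqrt{n/l}$, the irrational boundary slope of $P^{+}$. This is Lemma~\ref{lem:slope} with $3$ replaced by $l$, and its proof carries over unchanged. Consequently the innermost valid wall is produced by the \emph{smallest} solution of \eqref{eq:Pell3} yielding a primitive integral $u$ with $\innerpro{u}{v}\in\{1,2\}$, and the theorem reduces to testing solutions one at a time.

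If $l\mid Y+1$ for $Y=Y_1$ or $Y=-Y_1$ (with $X=X_1$ or $-X_1$), I would factor $(1+Y,XH,l(Y-1))=l\bigl(\tfrac{1+Y}{l},ZH,Y-1\bigr)$ and note that $\gcd\bigl(\tfrac{Y+1}{l},Z,Y-1\bigr)$ divides $\gcd(Y+1,Y-1)\mid 2$, so it is $1$ or $2$; dividing by $l$ or $2l$ accordingly produces the primitive $u$ with $\innerpro{u}{v}=2$ or $1$ of part~1(1). When $l\nmid Y_1\pm1$, no division of the fundamental vector by $l$ or $2l$ is possible, so by monotonicity I pass to $(Z_2,Y_2)=(2Y_1Z_1,Y_1^2+lnZ_1^2)$; using $Y_1^2-lnZ_1^2=1$ one computes $(1-Y_2,-X_2H,l(-Y_2-1))=-2l(nZ_1^2,Z_1Y_1H,Y_1^2)$, giving $u=-(nZ_1^2,Z_1Y_1H,Y_1^2)$ with $\innerpro{u}{v}=1$, which is part~1(2). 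It is primitive, since any common prime factor $p$ of its entries would divide $Y_1$ and $n$, whence $Y_1^2=1+lnZ_1^2\equiv1\pmod p$ forces $p\mid1$. Finally, if $\sqrt{ln}\in\Q$ then $ln$ is a perfect square, \eqref{eq:Pell3} has only the trivial solutions $(0,\pm1)$, and no wall of positive slope survives in $\Gamma_m$; hence $\mathcal C'$ reaches the boundary of $P^{+}$ and part~2 follows from \cite[Proposition 4.16]{Yoshi1}.

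I expect the delicate point to be the passage to the second solution in part~1(2): one must verify that the \emph{failure} of the fundamental solution forces $(Z_2,Y_2)$ to succeed, and with exactly $\innerpro{u}{v}=1$. The identity $Y_2-1=2lnZ_1^2$ shows $l\mid Y_2-1$ unconditionally, which makes the dichotomy (1)/(2) exhaustive and guarantees that a third solution is never needed; checking primitivity and, via the monotonicity of the previous paragraph, that no solution of smaller slope can intervene, is what turns this formal bookkeeping into a complete proof.
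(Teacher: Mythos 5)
Your proposal is correct and takes essentially the same route as the paper: the same normalization $\frac{2l}{\innerpro{u}{v}}u = v + Xh + Y\delta$ leading to the Pell equation $Y^2 - lnZ^2 = 1$, the same dichotomy on whether $l \mid Y_1 \pm 1$ (with the $\gcd = 1$ or $2$ distinction fixing $\innerpro{u}{v}$), and the same passage to the second solution via the factorization $(1-Y_2, -X_2H, l(-Y_2-1)) = -2l(nZ_1^2, Z_1Y_1H, Y_1^2)$. Your additional remarks---the monotonicity of slopes, primitivity of $u$ in case 1(2), and the unconditional divisibility $l \mid Y_2 - 1$ making the dichotomy exhaustive---merely make explicit points the paper leaves implicit.
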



\begin{center}

\end{center}

D{\footnotesize EPARTMENT OF} M{\footnotesize ATHEMATICS}, F{\footnotesize ACULTY OF} S{\footnotesize CIENCE}, K{\footnotesize OBE} U{\footnotesize NIVERSITY}, K{\footnotesize OBE}, 657, J{\footnotesize APAN}

{\it E-mail address:}\ akrmori@math.kobe-u.ac.jp

\end{document}